\documentclass{amsart}
\usepackage[utf8]{inputenc}
\usepackage{amsmath,amssymb,amsfonts,mathtools}
\usepackage{hyperref}
\usepackage{enumitem}
\usepackage{comment}
\usepackage{xcolor}
\usepackage{ulem}
\usepackage{graphicx}
\usepackage{placeins}

\newtheorem{theorem}{Theorem}

\newtheorem{lemma}{Lemma}

\newtheorem{problem}{Problem}
\theoremstyle{definition}

\theoremstyle{remark}
\newtheorem{remark}{Remark}

\newcommand{\R}{\mathbb{R}}
\newcommand{\C}{\mathbb{C}}
\newcommand{\spec}{\operatorname{Spec}}

\title[Periodic Orbits in Polynomial Hurwitz Vector Fields]{A family of Hurwitz polynomial vector fields in $\R^3$\\ having non-constant periodic orbits}

\author[\'A. Casta\~neda]{\'Alvaro Casta\~neda}
\author[S. Rebollo-Perdomo]{Salom\'on Rebollo-Perdomo}

\address{Departamento de Matem\'aticas, Universidad de Chile. Casilla 653, Santiago, Chile}
\email{castaneda@uchile.cl}

\address{Departamento de Matem\'atica, Universidad del B\'{\i}o-B\'{\i}o, Casilla 5--C, Concepci\'on, Chile}
\email{srebollo@ubiobio.cl}

\subjclass[2020]{34C25, 34D20, 37C27, 37C10, 34A34}
\keywords{Hurwitz vector field, periodic orbit, Markus--Yamabe conjecture, global asymptotic stability, polynomial vector field, limit cycle}

\thanks{The authors are funded by FONDECYT Regular Grant 1240361. The second author also is supported by Universidad de B\'{\i}o-B\'{\i}o Grant RE2482302}

\date{}
\begin{document}
\begin{abstract}

We construct a two-parameter family of polynomial vector fields in $\R^{3}$ and prove that, on a nonempty open set of parameter values, every member of the family is Hurwitz and possesses two non-constant periodic orbits, one of saddle type and one attracting.  
This answers Problem~19 in Gasull's list of open problems on low-dimensional dynamical systems concerning the existence of non-constant periodic orbits in Hurwitz smooth vector fields on $\R^3$. This also 
provides counterexamples to the expected non-existence of non-constant periodic orbits for Hurwitz polynomial vector fields; a question that arose from the first counterexamples to the Markus--Yamabe conjecture in the late 1990s and that was stated by Meisters in his featured MathSciNet review of the Bernat--Llibre counterexample.

\end{abstract}

\maketitle

\section{Introduction}

In 1960, Markus and Yamabe \cite{MY} asked whether a pointwise spectral
condition on the Jacobian matrix of a vector field is sufficient to guarantee
global asymptotic stability. Their question, now known as the Markus--Yamabe
Conjecture, can be stated as follows.

\medskip
\begin{center}
\begin{minipage}{0.9\textwidth}
\textbf{The Markus--Yamabe Conjecture.}
Consider the differential system
\begin{equation*}
    \dot{\mathbf{x}}=\mathbf{F}(\mathbf{x}), \qquad
    \mathbf{x}\in\R^{n},
\end{equation*}
where $\mathbf{F}\colon\R^{n}\to\R^{n}$ is of class $C^{1}$ and
$\mathbf{F}(\mathbf{0})=\mathbf{0}$. Assume that, for every
$\mathbf{x}\in\R^{n}$,
\begin{equation*}
    \spec D\mathbf{F}(\mathbf{x})
    \subset\{\lambda\in\C:\operatorname{Re}\lambda<0\}.
\end{equation*}
Then the singular point $\mathbf{x}=\mathbf{0}$ is globally asymptotically stable.
\end{minipage}
\end{center}
\medskip

A vector field $\mathbf{F}(\mathbf{x})$ satisfying the spectral condition is called \textit{Hurwitz}. The
conjecture is true for $n\leq 2$. The planar case was settled independently by
Fe\ss ler \cite{Fessler}, Guti\'errez \cite{Gutierrez}, and Glutsyuk
\cite{Glutsyuk}, after the earlier proof of Meisters and Olech
\cite{MeistersOlech} for planar polynomial vector fields. For $n\geq 3$ the
conjecture is false. In particular, Cima \textit{et al.} \cite{CEGHM} constructed a polynomial Hurwitz vector field in
$\R^{3}$ whose associated differential system has a solution escaping to infinity.

Although the Markus--Yamabe Conjecture was completely settled in the (classical) autonomous setting, it
continues to be revisited from several points of view.
Continuous and discontinuous piecewise linear differential systems have been
studied by Llibre and Zhang \cite{Llibre1}, Llibre and Menezes \cite{Llibre2},
and Zhang and Yang \cite{ZhangYang}. An infinite-dimensional point of view was
adopted by Rodrigues, Teixeira, and Gameiro \cite{Rodrigues}. In a
nonautonomous framework, Cheban \cite{Cheban} worked with cocycles, while
Casta\~neda and Robledo \cite{CR} introduced a spectral dichotomy version of
the problem. Several authors have also studied polynomial and nilpotent classes
closely related to the Markus--Yamabe Conjecture, together with their global
dynamics; see, for instance, \cite{CastanedaRebollo, CastanedaMachado, CGM}.
Two remarkable settings in which the conjecture does hold are provided by
triangular and gradient vector fields: Markus and Yamabe proved the conjecture
for triangular systems \cite[Theorem~4]{MY}, while Hartman
\cite[p.~539, Corollary~11.2]{Hartman}, together with the subsequent result of
Ma\~nosas and Peralta-Salas \cite{Manosas}, treated the gradient case.

In the present article, we do not seek a
new class of vector fields for which the Markus--Yamabe Conjecture holds, nor do we propose a new formulation of global asymptotic stability. Rather, the emphasis here is different.
Our aim is to understand
one specific mechanism that may destroy global attraction under the Hurwitz
hypothesis: the existence of periodic orbits. More precisely, we address the
problem of determining whether a Hurwitz polynomial vector field in dimension
three can have non-constant periodic orbits.

\subsection{From the first counterexamples to the periodic orbit problem}
\label{subsec:counterexamples}

Two counterexamples of very different nature to the Markus--Yamabe Conjeture are essential for framing the question
considered in this work. First, Bernat and Llibre \cite{BernatLlibre}
constructed a four-dimensional analytic Hurwitz differential system having a
non-constant periodic solution. Their example showed that the failure of global
asymptotic stability can be produced by periodic 
 orbits.
Second, Cima
\textit{et al.} \cite{CEGHM} found the three-dimensional polynomial vector field
\begin{equation*}
\mathbf{F}(x,y,z)=\left(-x+z(x+yz)^2,\,-y-(x+yz)^2,\,-z\right),
\end{equation*}
for which $D\mathbf{F}(x,y,z)$ has $-1$ as its only eigenvalue at every point.
Nevertheless, the corresponding system has the unbounded solution
\begin{equation*}
\label{eq:cimaorbit}
    \left(18 e^{t},-12 e^{2t},e^{-t}\right),
   \quad  t\in \R.
\end{equation*}
Thus polynomiality does not recover the global stability conclusion in
dimension three. However, the two mechanisms are completely
different: 
the former has a non-constant periodic
orbit, whereas the latter has
an orbit escaping to infinity.

We emphasize that this distinction was pointed out immediately after these
counterexamples appeared. In his featured MathSciNet review of the paper of
Bernat and Llibre, Meisters \cite{MeistersReview} discussed in detail both the
four-dimensional analytic construction and the 
discovered
three-dimensional polynomial example by Cima \textit{et al.} After stressing that the Bernat--Llibre
system remained important because of its periodic orbit, he stated that
\begin{quote}
``such a non-constant periodic orbit is surely impossible for polynomial
Hurwitz systems. (This remains to be proved.)''
\end{quote}
In this way, the review raised a problem that is more precise than the
original Markus--Yamabe Conjecture. Once the polynomial Hurwitz counterexamples
were known, the question was no longer whether global attraction could fail,
but whether such a failure could be produced by periodic dynamics inside the
polynomial category.

The difference is particularly relevant in dimension three. In dimension two 
 the periodic phenomenon is excluded. Bernat and
Llibre showed that periodic dynamics is possible for analytic Hurwitz systems
in dimension four.
What
remained open was the simultaneous occurrence of the three properties
\[
\text{dimension three},\qquad \text{ Hurwitz polynomial  vector field},\qquad
\text{non-constant periodic orbit}.
\]

More than two decades later, Gasull formalized the three-dimensional
periodic orbit question in his list of open problems in low-dimensional
dynamical systems \cite{Gasull} as follows.

\begin{problem}[{\cite[Problem 19]{Gasull}}]
\label{prob:gasull}
Are there smooth vector fields in $\R^{3}$ under the hypotheses of the
Markus--Yamabe conjecture and having periodic orbits?
\end{problem}

We point out that Problem~\ref{prob:gasull} only asks for a smooth vector
field. In view of the discussion above, the polynomial case is a stronger
question, and it is precisely the one suggested by the featured review
\cite{MeistersReview}. The aim of this work is to answer this stronger
problem.

\subsection{Main result}

The main result of this paper gives an affirmative answer to
Problem~\ref{prob:gasull} and, at the same time, shows that the expectation
stated in \cite{MeistersReview}  does not hold. 
The construction is stronger than a
single isolated example: it provides a two-parameter family of polynomial vector fields and an
open set of parameters for which the Hurwitz property and the periodic
dynamics coexist. More precisely, 
we consider the following 2-parameter family of polynomial vector fields:
\begin{equation}
\label{campo-principal}
\mathbf{F}_{\mu,e}(x,y,z):=\big(F_1(x,y,z), F_2(x,y,z), F_3(x,y,z)\big),
\end{equation}
where $\mu,e\in (0, \infty)$,
\begin{align*}
F_1(x,y,z)&=\mu x\left(  \left( {x}^{2}+{y}^{2}-5 \right)  \left( {\frac {20}
{21}}-{x}^{2}-{y}^{2} \right) -{\frac {400\,{z}^{2}}{21}} \right) -
 \left( 1+e\bigg(z+\frac{ {x}^{2}+{y}^{2}}{10} \bigg)  \right) y,
\\
F_2(x,y,z)&=\mu y\, \left(  \left( {x}^{2}+{y}^{2}-5 \right)  \left( {\frac {20}
{21}}-{x}^{2}-{y}^{2} \right) -{\frac {400\,{z}^{2}}{21}} \right) +
 \left( 1+e\bigg(z+\frac{ {x}^{2}+{y}^{2}}{10} \bigg)  \right) x,
\\
F_3(x,y,z)&=-z-\frac{ {x}^{2}+{y}^{2}}{10} .
\end{align*}

We prove the following result.

\begin{theorem}
\label{main-theorem}
There exists an small open region $U$ in the square $(0,1/500)\times (0,1/6)$  such that for each pair $(\mu,e)\in U$ the corresponding polynomial vector field $\mathbf{F}_{\mu,e}(x,y,z)$ given in \eqref{campo-principal} is Hurwitz on $\R^3$, it has a unique singular point at the origin, which is locally asymptotically stable, and exactly two hyperbolic $2\pi$-periodic orbits
\[
\gamma_1:=\left\{(x,y,z)\in \R^3 \mid x^2+y^2=1,\ z=-\frac{1}{10}\right\},
\quad
\gamma_2:=\left\{(x,y,z) \in \R^3\mid x^2+y^2=4,\ z=-\frac{2}{5}\right\}.
\]
Moreover,  $\gamma_1$ is 
 of saddle type and $\gamma_2$ is locally asymptotically stable. 
\end{theorem}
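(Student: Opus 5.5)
We work in cylindrical-type coordinates $\rho=x^2+y^2$, $\theta=\arg(x+iy)$, $z$, and set $w:=z+\rho/10$ and $G(\rho,z):=(\rho-5)(\tfrac{20}{21}-\rho)-\tfrac{400}{21}z^2$. A direct substitution turns the system $\dot{\mathbf x}=\mathbf F_{\mu,e}(\mathbf x)$ into
\[
\dot\rho=2\mu\rho\,G(\rho,z),\qquad \dot z=-z-\tfrac{\rho}{10},\qquad \dot\theta=1+e\Big(z+\tfrac{\rho}{10}\Big).
\]
The $(\rho,z)$-subsystem is therefore decoupled from $\theta$. The half-plane $\{\rho>0\}$ is invariant for it, and the line $\{\rho=0\}$ (the $z$-axis) is invariant as well.

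\textbf{Singular points and candidate orbits.} On the line $z=-\rho/10$ we compute
\[
G=-\tfrac{25}{21}(\rho^2-5\rho+4)=-\tfrac{25}{21}(\rho-1)(\rho-4).
\]
Hence the equilibria of the planar system in $\rho\ge 0$ are $(0,0)$, $(1,-\tfrac1{10})$ and $(4,-\tfrac25)$. At the last two equilibria $w=0$, so $\dot\theta=1$, and they lift to the circles $\gamma_1$ and $\gamma_2$, each traversed with period $2\pi$. A singular point of $\mathbf F_{\mu,e}$ with $\rho>0$ would have to satisfy $\dot\rho=\dot z=0$, which forces $w=0$ and hence $\dot\theta=1\neq0$; so no such point exists. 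On the $z$-axis we have $\dot z=-z$, so the origin is the unique singular point.

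\textbf{Hyperbolicity and stability type.} The Floquet multipliers of $\gamma_i$ transverse to the flow are $\exp(2\pi\lambda)$, where $\lambda$ runs over the eigenvalues of the planar Jacobian at the corresponding equilibrium. Using $G=0$ there, this Jacobian is
\[
\begin{pmatrix} 2\mu\rho G_\rho & 2\mu\rho G_z\\ -\tfrac1{10} & -1\end{pmatrix}.
\]
Its determinant is
\[
-2\mu\rho\Big(G_\rho-\tfrac1{10}G_z\Big)=-2\mu\rho\,\frac{d}{d\rho}G\Big(\rho,-\tfrac{\rho}{10}\Big),
\]
and $\frac{d}{d\rho}G(\rho,-\rho/10)$ equals $\tfrac{25}{21}(\rho-1)(\rho-4)$ up to sign and differentiated. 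This derivative is positive at $\rho=1$ and negative at $\rho=4$, so the determinant is negative at $\rho=1$, giving a saddle. At $\rho=4$ the determinant is positive, and the trace is negative for $\mu$ small, giving a sink. It follows that $\gamma_1$ is a hyperbolic saddle-type orbit and $\gamma_2$ is hyperbolic and attracting.

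\textbf{Exactly two periodic orbits.} Any other periodic orbit of $\mathbf F_{\mu,e}$ projects to a periodic orbit of the planar system in $\{\rho>0\}$. The $z$-axis contains none, since there $\dot z=-z$. To exclude planar cycles in $\{\rho>0\}$ I would apply Bendixson's criterion on this simply connected region. The divergence is
\[
2\mu\,\partial_\rho(\rho G)-1 .
\]
Now $\partial_\rho(\rho G)=-3\rho^2+2\cdot\tfrac{125}{21}\rho-\tfrac{100}{21}-\tfrac{400}{21}z^2$ is bounded above by an explicit constant of order $10$. For $\mu<1/500$ this makes the divergence negative everywhere, so there are no planar cycles.

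\textbf{Hurwitz property (main obstacle).} By rotational symmetry it suffices to examine $D\mathbf F_{\mu,e}$ at the points $(r,0,z)$. I would compute the characteristic polynomial $\lambda^3+a_1\lambda^2+a_2\lambda+a_3$, whose coefficients are polynomials in $(r^2,z,\mu,e)$, and verify the Routh--Hurwitz conditions $a_1>0$, $a_3>0$ and $a_1a_2-a_3>0$ on all of $\R^2$.

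The difficulty is that at $\mu=e=0$ the spectrum is $\{\pm i,-1\}$, which is only marginal. Hurwitzness therefore comes from a delicate balance between the radial term $\mu$ and the twist term $e$. I expect $a_1a_2-a_3$ to be the critical inequality, with the $e$-dependent cross terms compensating the regions where $G+\rho G_\rho>0$ tends to destabilize.

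My plan for this step is as follows. First, fix one explicit parameter pair $(\mu_0,e_0)$ in $(0,1/500)\times(0,1/6)$. Second, prove the three polynomial inequalities for it: control the leading terms at infinity, where the $-\rho^2$ and $-z^2$ terms dominate, and then settle the remaining compact region by a discriminant/Sturm or sum-of-squares certificate. Third, argue openness: strict positivity on the compact region persists under small perturbations of the parameters, and the uniform dominance of the leading terms persists at infinity. This yields the open set $U$. Local asymptotic stability of the origin then follows from the Hurwitz property of $D\mathbf F_{\mu,e}(\mathbf 0)$.
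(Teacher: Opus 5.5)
Your dynamical analysis is correct and is essentially the paper's proof of Lemma~1, written in $\rho=r^2$ instead of $r$. This covers the reduction to the $(\rho,z)$-system, the equilibria, the multipliers $e^{2\pi\lambda}$, and Bendixson on $\{\rho>0\}$. Note also that $a_1=1-2\mu\,\partial_\rho(\rho G)$ is exactly minus your planar divergence, so your Bendixson bound already gives $a_1>0$.

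The genuine gap is the Hurwitz property, which is the core of the theorem and which your proposal only announces. You fix no parameter pair, you do not show $a_3>0$, and above all you give no argument that $a_1a_2-a_3>0$ on all of $\R^2$. This is not a routine check:
\begin{itemize}
\item At $\mu=0$ one has, at $(x,0,z)$, $a_1a_2-a_3=\frac{e}{50}x^2\bigl(ex^2+10ez+10\bigr)$. This vanishes on the $z$-axis and is negative where $ex^2+10ez+10<0$.
\item For $e=0$ one has $a_1a_2-a_3=\frac{\mu}{21}\bigl(252x^4-1000x^2+400+1600z^2-160x^2z\bigr)+O(\mu^2)$, which is negative at $(x,z)=(\sqrt2,0)$.
\end{itemize}
Hence the inequality fails there whenever the twist is weak relative to $\mu$ (to leading order, roughly $e<70\mu$). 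For instance, at $(\mu,e)=(10^{-3},10^{-2})$ one gets $a_1a_2-a_3\approx-0.024$ at $(\sqrt2,0,0)$. So a sizeable part of the square $(0,1/500)\times(0,1/6)$ is not Hurwitz, and the existence of a good pair is precisely what has to be proved.

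The paper supplies this missing content in three pieces.
\begin{itemize}
\item It fixes $(\mu_0,e_0)=(1/1000,1/10)$ and regards $c_2c_1-c_0$ (up to a positive factor) as a degree-$6$ polynomial in $z$ over $\R[x]$. It then certifies with Yang's discriminant sequences that this polynomial has no real zero in $z$ for any $x$, working first in $z$ and then on the entries $D_2,\dots,D_6$ in $x$.
\item It proves $c_2>0$ and $c_0>0$ uniformly on a parameter rectangle, by completing squares and bounding $f_3f_4$ from below.
\item For openness it uses the weighted grading $\deg x=1$, $\deg z=2$. Under this grading the top part of $c_2c_1-c_0$ is $\frac{10\mu^3}{9261}(63x^4+400z^2)(21x^4+80z^2)(21x^4+400z^2)\ge 30\mu^3(x^4+z^2)^3$. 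This dominates the remaining terms uniformly on $[1/2000,3/2000]\times[1/20,3/20]$ outside a compact set, and compactness finishes the argument.
\end{itemize}

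Your remark that the $-\rho^2$ and $-z^2$ terms dominate at infinity needs exactly this quasi-homogeneous grading. In ordinary degree the top part is a multiple of $x^{12}$, which gives nothing along the $z$-axis. Moreover, the leading part is only $O(\mu^3)$ while the lower-order coefficients are $O(\mu+e)$, so the leftover compact region is enormous. That is why the paper uses an exact certificate valid on all of $\R^2$ at $(\mu_0,e_0)$ rather than local estimates. Your plan has the right architecture, but as written the Hurwitz claim, and hence the theorem, is not proved.
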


The novelty of Theorem~\ref{main-theorem} lies in the dynamical mechanism that destroys global attraction. The example
of Cima \textit{et al.} destroys global attraction through an unbounded solution,
while the vector fields introduced in this work do it through periodic orbits.
Therefore, the phenomenon exhibited by Bernat and Llibre in the analytic
four-dimensional setting already occurs for polynomial vector fields in
dimension three. 
In fact, we show that there are Hurwitz polynomial vector fields in $\R^n$, for $n\geq 3$, having periodic orbits; see Remark~\ref{rem-nD-vec-field}. 
We point out that the periodic orbit $\gamma_2$ is actually an attracting limit cycle (isolated periodic orbit) of the vector field, showing the coexistence of two different types of attractors in Hurwitz polynomial vector fields.

\medskip

The core of the proof is to establish the Hurwitz property for $\mathbf{F}_{\mu,e}$ on the whole space $\R^{3}.$ If
\[
P(\lambda)=\lambda^3+a_1(\mathbf{x})\lambda^2+a_2(\mathbf{x})\lambda+a_3(\mathbf{x})
\]
denotes the characteristic polynomial of $D\mathbf{F}_{\mu,e}(\mathbf{x})$, then the Hurwitz property for $\mathbf{F}_{\mu,e}$ is
equivalent to the Routh--Hurwitz inequalities
\[
a_1(\mathbf{x})>0,\qquad a_3(\mathbf{x})>0,\qquad a_1(\mathbf{x})\,a_2(\mathbf{x})-a_3(\mathbf{x})>0,
\]
which must be verified for every $\mathbf{x}=(x,y,z)\in\R^{3}$. By construction, the familiy has a rotational symmetry around $z-$axis, which reduces this verification to the plane $\{y=0\}$. There, the first
two inequalities are proved by explicit estimates valid on a whole rectangle
of parameters, whereas the third one is first established 
 for 
$\mathbf{F}_{\mu,e}(\mathbf{x})$ at $(\mu_0,e_0) = (1/1000, 1/10)$, by means of an exact algebraic criterion for the number of
real roots of a polynomial (the discriminant sequences of Yang \cite{Yang}),
and  then extended to an open set of parameters around $(\mu_0,e_0)$
by a compactness argument, after proving that the inequality holds
outside a compact set uniformly with respect to the parameters.  The
existence, exact number, and transverse stability of the periodic orbits are
established independently through a reduced vector field in cylindrical
coordinates.

This separation is useful for understanding the construction. On the one hand,
the Cartesian Jacobian matrix is Hurwitz at every point of the phase space. On
the other hand, the global geometry of the vector field produces periodic
dynamics. Hence the pointwise Hurwitz condition and hyperbolic periodic orbits
can coexist in dimension three, even under the additional restriction of
polynomiality.

\subsection{Some consequences and perspectives}

Theorem~\ref{main-theorem} closes the gap between the two classical mechanisms
described above. The Bernat--Llibre example provides periodic dynamics but is
analytic and four-dimensional; the Cima--van den
Essen--Gasull--Hubbers--Ma\~nosas example is polynomial and three-dimensional
but  an unbounded orbit. The family constructed here exhibits
simultaneously the three properties listed in
Section~\ref{subsec:counterexamples}: dimension three, Hurwitz polynomial  vector field, and
non-constant periodic orbit.

We point out that this result also changes the role of the periodic orbit
question inside the Markus--Yamabe problem. After the classical conjecture was
disproved, it was natural to ask which global behaviors were still excluded by
the pointwise Hurwitz condition. Problem~19 of Gasull identifies periodic
orbits as one of the first such behaviors to test. Theorem~\ref{main-theorem}
shows that they are not excluded, even in the polynomial setting.

The construction naturally leads to further questions. The open parameter
region $U,$ mentioned in Theorem~\ref{main-theorem}, shows that the phenomenon is not confined to a single
member of the family. Moreover, the two periodic orbits are hyperbolic, so they
have the usual local $C^1$ persistence as dynamical objects. There are
distinct statements: hyperbolicity gives persistence of the periodic orbits,
whereas the global Routh--Hurwitz argument is what guarantees the Hurwitz
property throughout an open region of the $(\mu,e)$-plane. It would be
interesting to determine how large the class of Hurwitz polynomial vector
fields exhibiting this behavior is, and which additional structural
assumptions are sufficient to exclude periodic dynamics. These broader
questions will not be pursued here.

\subsection{Structure of the paper} The paper is organized as follows. In Section~\ref{sec:field} we give the proof of Theorem~\ref{main-theorem},
which is based on three lemmas: Lemma~\ref{propiedades-dinamicas}, on
the singular point and the periodic orbits;
Lemma~\ref{invarianza-de-matriz-jacobiana}, on the invariance of the
characteristic polynomial of the Jacobian matrix under rotations around the
$z$-axis; and Lemma~\ref{Hurwitz-en-plano-xz}, on the Hurwitz property of $\mathbf{F}_{\mu_0,e_0}(\mathbf{x})$. Section~\ref{proofs} contains the proofs of the Lemmas.
Section~\ref{example} presents a detailed analysis of the dynamics of the vector field $\mathbf{F}_{\mu_0,e_0}(\mathbf{x})$, together with a numerical phase portrait that illustrates the theorem.

\section{Proof of Main Result}\label{sec:field}

The proof of Theorem \ref{main-theorem} will divide in the following three lemmas, which we prove in the following section. The first lemma concerns the dynamical properties of the vector field $\mathbf{F}_{\mu,e}(x,y,z)$.

\begin{lemma}
\label{propiedades-dinamicas}
Each polynomial vector field $\mathbf{F}_{\mu,e}(x,y,z)$ given in \eqref{campo-principal}, with  $0<\mu\leq 588/6575$, has a unique singular point at the origin, which is  locally asymptotically stable, and exactly two hyperbolic $2\pi$-periodic orbits
\[
\gamma_1:=\left\{(x,y,z)\in \R^3 \mid x^2+y^2=1,\ z=-\frac{1}{10}\right\},
\quad
\gamma_2:=\left\{(x,y,z) \in \R^3\mid x^2+y^2=4,\ z=-\frac{2}{5}\right\}.
\]
Moreover,  $\gamma_1$ is of saddle type and $\gamma_2$ is locally asymptotically stable.
\end{lemma}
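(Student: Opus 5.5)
We plan to pass to cylindrical coordinates $x=r\cos\theta$, $y=r\sin\theta$, $z=z$. Writing
\[
G(r,z):=(r^2-5)\Big(\tfrac{20}{21}-r^2\Big)-\tfrac{400}{21}z^2,
\qquad w:=z+\tfrac{r^2}{10},
\]
a direct computation turns the system $\dot{\mathbf x}=\mathbf F_{\mu,e}(\mathbf x)$ into
\[
\dot r=\mu\, r\, G(r,z),\qquad \dot z=-z-\tfrac{r^2}{10},\qquad \dot\theta = 1+e\Big(z+\tfrac{r^2}{10}\Big).
\]
The $(r,z)$-subsystem is therefore decoupled from $\theta$, and we call it the reduced planar system on $\{r\ge 0\}$. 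All three claims of the lemma will be read off from it.

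\emph{Singular points.} On the $z$-axis we have $F_1=F_2=0$ and $F_3=-z$, so the only singular point there is the origin. Off the axis, $F_1=F_2=0$ forces both $\dot r=0$ and $r\dot\theta=0$. The latter gives $1+ew=0$, while $\dot z=0$ gives $w=0$, which is impossible. Hence the origin is the unique singular point. Its linearization is block diagonal, with eigenvalues $-\tfrac{100\mu}{21}\pm i$ and $-1$, so it is locally asymptotically stable.

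\emph{Periodic orbits.} For $r>0$, the equilibria of the reduced system satisfy $z=-r^2/10$ and $G=0$. Substituting and setting $s=r^2$, we get
\[
G=-\tfrac{25}{21}(s-1)(s-4).
\]
This gives exactly the two equilibria $(r,z)=(1,-1/10)$ and $(2,-2/5)$. On both we have $w=0$, hence $\dot\theta=1$, so each equilibrium yields a $2\pi$-periodic circle; these are $\gamma_1$ and $\gamma_2$. Conversely, a non-constant periodic orbit of $\mathbf F_{\mu,e}$ projects onto either an equilibrium or a periodic orbit of the reduced system. Projection onto the axis $r=0$ is impossible, since there $\dot z=-z$.

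It remains to exclude periodic orbits of the reduced system in the simply connected half-plane $r>0$. We will use Bendixson's criterion. The divergence is
\[
\mu\big(G+r\,\partial_rG\big)-1 .
\]
In terms of $s$, and with $h(s)=(s-5)(20/21-s)$, we have
\[
G+r\,\partial_rG\le h(s)+2s\,h'(s)=-5s^2+\tfrac{125}{7}s-\tfrac{100}{21}.
\]
This quadratic attains its maximum $6575/588$ at $s=25/14$. Hence the hypothesis $\mu\le 588/6575$ gives divergence $\le 0$, with equality only on the single point $(r^2,z)=(25/14,0)$, a set of measure zero. Therefore there are no periodic orbits of the reduced system, and exactly two non-constant periodic orbits of $\mathbf F_{\mu,e}$. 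Matching the constant $588/6575$ in the hypothesis is the step I expect to need the most care; it is the only place where the bound on $\mu$ enters.

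\emph{Hyperbolicity and stability.} The Poincaré map on the section $\{\theta=0\}$ is the reduced flow evaluated at the return time $\tau(r,z)$. Since the reduced vector field vanishes at the equilibrium and $\tau=2\pi$ there, the derivative of the Poincaré map at the fixed point equals $\exp(2\pi J)$, where $J$ is the reduced Jacobian at that equilibrium. Using $\partial_s h=-2s+125/21$, we compute:
\begin{itemize}
\item at $(1,-1/10)$: $\det J=-\tfrac{208\mu}{21}+\tfrac{16\mu}{21}<0$, so $J$ has real eigenvalues of opposite sign and $\gamma_1$ is hyperbolic of saddle type;
\item at $(2,-2/5)$: $\det J=\tfrac{344\mu}{21}+\tfrac{256\mu}{21}>0$ and $\operatorname{tr}J=-\tfrac{344\mu}{21}-1<0$, so both nontrivial multipliers have modulus less than $1$ and $\gamma_2$ is hyperbolic and asymptotically stable.
\end{itemize}
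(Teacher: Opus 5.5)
Your proposal is correct and follows the same route as the paper: cylindrical coordinates, the $e$-independent reduced planar field whose equilibria $(1,-1/10)$ and $(2,-2/5)$ give $\gamma_1,\gamma_2$, Bendixson's criterion via the maximum $6575/588$ of the divergence term, and linearization at the two equilibria. Your handling of the endpoint $\mu=588/6575$ (divergence vanishing at a single point) and your justification $DP=\exp(2\pi J)$ are more careful than the paper, whose strict inequality $-1+\frac{6575}{588}\mu<0$ fails at that endpoint; the only slip is arithmetic, since at $(1,-1/10)$ one has $J_{11}=2s\,h'(s)\,\mu=166\mu/21$, so $\det J=-150\mu/21$ (matching the paper's eigenvalues) rather than $-192\mu/21$, which does not change the sign or the saddle conclusion.
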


The second and third lemmas are regarding the Hurwitz property of  $\mathbf{F}_{\mu,e}(x,y,z)$. 
\begin{lemma}
\label{invarianza-de-matriz-jacobiana}
The characteristic polynomial of the Jacobian matrix $D\mathbf{F}_{\mu,e}(x,y,z)$ of the polynomial vector field $\mathbf{F}_{\mu,e}(x,y,z)$ given in \eqref{campo-principal} is invariant under  rotations around the $z$-axis.
\end{lemma}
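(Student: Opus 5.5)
The plan is to show that $\mathbf{F}_{\mu,e}$ is equivariant under rotations about the $z$-axis, and then pass from equivariance to conjugacy of Jacobians. For $\theta\in\R$ let
\[
R_\theta=\begin{pmatrix}\cos\theta&-\sin\theta&0\\ \sin\theta&\cos\theta&0\\ 0&0&1\end{pmatrix}.
\]
I will first check that $\mathbf{F}_{\mu,e}(R_\theta\mathbf{x})=R_\theta\mathbf{F}_{\mu,e}(\mathbf{x})$ for all $\mathbf{x}\in\R^3$. Differentiating this identity in $\mathbf{x}$ gives $D\mathbf{F}_{\mu,e}(R_\theta\mathbf{x})R_\theta=R_\theta D\mathbf{F}_{\mu,e}(\mathbf{x})$, that is,
\[
D\mathbf{F}_{\mu,e}(R_\theta\mathbf{x})=R_\theta\, D\mathbf{F}_{\mu,e}(\mathbf{x})\,R_\theta^{-1}.
\]
Similar matrices have the same characteristic polynomial. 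Hence $\det(\lambda I-D\mathbf{F}_{\mu,e}(R_\theta\mathbf{x}))=\det(\lambda I-D\mathbf{F}_{\mu,e}(\mathbf{x}))$ for every $\theta$ and every $\mathbf{x}$, which is the claim.

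To prove equivariance, I will write the field in a structured form. Set $r^2=x^2+y^2$ and define the scalar functions
\[
A(r^2,z)=\mu\Big((r^2-5)\big(\tfrac{20}{21}-r^2\big)-\tfrac{400z^2}{21}\Big),\qquad B(r^2,z)=1+e\Big(z+\tfrac{r^2}{10}\Big).
\]
Both are invariant under $R_\theta$, since $R_\theta$ preserves $r^2$ and $z$. With $J$ the rotation by $\pi/2$ in the $(x,y)$-plane, the field reads
\[
(F_1,F_2)=A\,(x,y)+B\,J(x,y),\qquad F_3=-z-\tfrac{r^2}{10}.
\]
The planar rotation $R_\theta|_{\R^2}$ commutes with both the identity and $J$, so the horizontal part is equivariant. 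The third component is invariant, and $R_\theta$ fixes the $z$-coordinate, so the vertical part is equivariant as well.

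No step here is a genuine obstacle. The only care needed is in rewriting $\mathbf{F}_{\mu,e}$ exactly in the form above, which amounts to reading off $F_1,F_2$ from \eqref{campo-principal}, and in applying the chain rule correctly to the equivariance identity. A consequence worth noting for later use is that the Routh--Hurwitz coefficients $a_1,a_2,a_3$ depend only on $(r,z)$. This is what allows the Hurwitz verification to be reduced to the half-plane $\{y=0,\ x\ge 0\}$.
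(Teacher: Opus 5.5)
Your proposal is correct and follows the paper's proof: you establish $\mathbf{F}_{\mu,e}(R_\theta\mathbf{x})=R_\theta\mathbf{F}_{\mu,e}(\mathbf{x})$, differentiate to get $D\mathbf{F}_{\mu,e}(R_\theta\mathbf{x})=R_\theta D\mathbf{F}_{\mu,e}(\mathbf{x})R_\theta^{-1}$, and conclude by similarity. The one addition is your decomposition $(F_1,F_2)=A\,(x,y)+B\,J(x,y)$ with $A,B$ rotation-invariant, which makes explicit the equivariance that the paper simply asserts from the invariance of $x^2+y^2$.
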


\begin{lemma}
\label{Hurwitz-en-plano-xz}
Let $\mu=\mu_0=1/1000$ and $e=e_0=1/10$.  The Jacobian matrix $D\mathbf{F}_{\mu_0,e_0}(x,y,z)$ is Hurwitz for every point $(x,0,z)\in\R^3$.
\end{lemma}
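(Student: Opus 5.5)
We work on the plane $\{y=0\}$. Put $r=x^2$ (so $r\ge 0$) and $w=z$, and write
\[
G(r,z)=(r-5)\Bigl(\tfrac{20}{21}-r\Bigr)-\tfrac{400z^2}{21},\qquad h(r,z)=1+e_0\Bigl(z+\tfrac{r}{10}\Bigr).
\]
The plan is to compute the Jacobian $D\mathbf{F}_{\mu_0,e_0}(x,0,z)$ explicitly. At $y=0$ the partial derivatives are:
\begin{itemize}
\item $\partial_x F_1=\mu_0(G+2r\,G_r)$,
\item $\partial_y F_1=-h$,
\item $\partial_z F_1=\mu_0 x G_z$,
\item $\partial_x F_2=h+x\,\partial_x h$,
\item $\partial_y F_2=\mu_0 G$,
\item $\partial_z F_2=e_0 x$,
\item $\partial_x F_3=-x/5$,
\item $\partial_y F_3=0$,
\item $\partial_z F_3=-1$.
\end{itemize}
From these we read off the coefficients $a_1=-\operatorname{tr}$, $a_2$ (the sum of principal $2\times2$ minors) and $a_3=-\det$. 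Every odd power of $x$ should cancel, so that $a_1,a_2,a_3$ become polynomials in $(r,z)$ with rational coefficients. We then verify the three Routh--Hurwitz inequalities on the half-plane $\{r\ge 0\}$.

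For $a_1$, the expression is $-\mu_0(2G+2rG_r)+1$. For $a_3$, expanding the determinant gives something of the form $\mu_0(\dots)+h\,(h+\dots)+\dots$. These should follow from elementary estimates. The key facts are the following.
\begin{itemize}
\item $G$ is bounded above by a small constant and tends to $-\infty$ quadratically, so $-\mu_0 G$ is a nonnegative-dominated term.
\item The rotation term $h(h+x\partial_xh)$ is a perfect-square-like quantity plus controlled corrections, because $e_0$ is small.
\item The constant $1$ coming from $\partial_z F_3$ dominates near the region where $G>0$.
\end{itemize}
Concretely, we will split into the region where $G\ge 0$ (a compact ellipse-like region $\tfrac{20}{21}\lesssim r\lesssim 5$, $|z|$ small), where explicit bounds suffice, and its complement, where the sign of each term is clear.

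The main obstacle is $a_1a_2-a_3>0$. This is a polynomial $Q(r,z)$ of moderate degree, and crude estimates will not obviously work because $\mu_0$ is tiny while the rotation terms are $O(1)$. To handle it, we view $Q$ as a polynomial in $z$ with coefficients in $r$, or alternatively fix rays. We want to show that $Q$ has no real zeros in $\{r\ge 0\}$ and is positive at one point, for instance the origin, where the Jacobian is $\operatorname{diag}$-like with eigenvalues $-\tfrac{100\mu_0}{21}\pm i$ and $-1$. To show the absence of zeros:
\begin{enumerate}
\item Compute $\partial_z Q$ and form the resultant of $Q$ and $\partial_z Q$ with respect to $z$. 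Its nonnegative real roots in $r$ partition $[0,\infty)$ into intervals.
\item On each interval, the number of real roots of $Q(r,\cdot)$ is constant. We check it is zero at a sample rational $r$ using Yang's discriminant sequence, which is exact.
\item At the critical $r$-values themselves, we check separately using isolating intervals.
\item For large $r$ or large $|z|$, we check positivity via the leading homogeneous part of $Q$, whose coefficient we expect to be a positive multiple of $\mu_0^2(\dots)$ times $1+\dots$.
\end{enumerate}
All these computations are exact rational arithmetic. The delicate step is certifying the sign at and between the roots of the resultant.
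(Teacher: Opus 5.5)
Your Jacobian entries and your formula for $a_1$ are correct. Your plan for $a_1a_2-a_3>0$ is sound, but it certifies the inequality differently from the paper.

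\textbf{The two routes for $a_1a_2-a_3>0$.} The paper treats $d_{00}$ (a positive multiple of $a_1a_2-a_3$) as a sextic in $z$ over $\R[x]$ and computes its full Yang discriminant sequence $[1,-D_2,-D_3,D_4,D_5,-D_6]$. It then certifies the sign of \emph{every} $D_i(x)$ on all of $\R$, again by discriminant sequences, for polynomials of degree up to $60$. For each $x$ the revised sign list then has three sign changes, so there are no real $z$-roots, and no case split in $x$ is needed.

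You instead argue by continuity in the parameter. Because the $z^6$-coefficient of $Q$ is a positive constant, the number of real roots of $Q(r,\cdot)$ can only change at nonnegative roots of $\operatorname{Res}_z(Q,\partial_zQ)$. So you need only that one polynomial plus sample points.

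Here your route needs strictly less verification than the paper's. The resultant $\operatorname{Res}_z(Q,\partial_zQ)$ is, up to a nonzero constant, the last member $D_6$ written in $r=x^2$, and the paper's computation gives $D_6>0$ on $\R$. Hence there are no critical values and your ``delicate step'' is empty. Your argument reduces to certifying that this one polynomial has no root in $[0,\infty)$, and that $Q(0,\cdot)$, proportional to $128z^6+17064z^4+70560z^3+713226z^2+17640z+177242$, has no real root. The latter is true and easily checked.

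This matters because at a genuine algebraic critical value $r^*$, isolating intervals alone could not exclude a real double root (a tangency). You would need exact arithmetic over $\mathbb{Q}(r^*)$, or a proof that $Q$, $Q_r$, $Q_z$ have no common real zero.

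Your step 4 is redundant but true. In $(r,z)$ the top-degree form of $Q$ is a positive multiple of $\mu_0^{3}(63r^2+400z^2)(21r^2+80z^2)(21r^2+400z^2)$, which is positive definite. Note the factor is $\mu_0^3$, not $\mu_0^2$.

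\textbf{An inaccuracy in the easy part.} On $\{G<0\}$ the signs of the individual terms are not clear. Expanding exactly, the $e_0r/5$ cross terms cancel and
\[
a_3=h^2+\tfrac{5\mu_0^2}{441}\,(21r^2-125r+400z^2+100)\,(21r^2-32rz-75r+80z^2+20).
\]
The second factor is negative at, for example, $(r,z)=(1/2,0)$, where $G<0$. Moreover, $h$ vanishes on the line $z=-10-r/10$.

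The fix is the paper's Step~2. Both factors are positive outside a bounded box, which in $x$ is $[-5/2,5/2]\times[-3/5,3/2]$. On that box $h\ge 9/10$, and the $\mu_0^2$ term is far smaller than $81/100$.

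Likewise $a_1$ needs no splitting. Completing the square gives $a_1=1+\mu_0\bigl(6(r-\tfrac{125}{63})^2+\tfrac{800}{21}z^2-\tfrac{18650}{1323}\bigr)\ge 1-\tfrac{18650}{1323}\mu_0>0$.
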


The second lemma implies that is enough to prove the Hurwitz property  for points of the form $(x,0,z)$. The third one ensures that a concrete polynomial vector field $\mathbf{F}_{\mu_0,e_0}(x,y,z)$ has Hurwitz property  for points of the form $(x,0,z)$.
The combination of these two last lemmas gives the Hurwitz property of $\mathbf{F}_{\mu_0,e_0}(x,y,z)$ on the whole $\R^3$.
We will prove Theorem~\ref{main-theorem} by assuming Lemmas \ref{propiedades-dinamicas}, \ref{invarianza-de-matriz-jacobiana}, and \ref{Hurwitz-en-plano-xz}.

\begin{proof}[Proof of Theorem~\ref{main-theorem}]
The uniqueness of the origin as singular point and the existence of the two periodic orbits $\gamma_1$ and $\gamma_2$, as well as their dynamical properties are proved by Lemma~\ref{propiedades-dinamicas}. Hence, it remains to prove the Hurwitz property of the vector field.

Lemma~\ref{invarianza-de-matriz-jacobiana} implies that it is enough to prove the Hurwitz property  for points of the form $(x,0,z)$.
The characteristic polynomial of $D\mathbf{F}_{\mu,e}(x,0,z)$ has the form
$$
P_{\mu,e}(\lambda)=\lambda^3+c_2\lambda^2
+c_1\lambda+c_0,
$$
where
\begin{align*}
c_2&=1+ \mu \left( 6\, \left( x^2-{\frac {125}{63}} \right) ^{2}+{\frac {800\,{z}
^{2}}{21}}-{\frac {18650}{1323}} \right), \\
c_1&=\frac{1}{100}\big( 2ex^2+g(x,z)\big)g(x,z)+\frac{\mu }{21}f_1(x,z)+\frac{5\mu^2}{441}f_2(x,z)f_3(x,z),\\
c_0&=\frac{1}{100}\big(g(x,z)\big)^2+\frac{5\mu^2 }{441}f_3(x,z)f_4(x,z),
\end{align*}
with 
\begin{align*}
g(x,z)&=ex^2+10ez+10,\\
f_1(x,z)&=126x^4-160x^2z-500x^2+800z^2+200,\\
f_2(x,z)&=21x^4-75x^2+80z^2+20,\\
f_3(x,z)&=21x^4-125x^2+400z^2+100,\\
f_4(x,z)&=21x^4-32x^2z-75x^2+80z^2+20.
\end{align*}
We prove the Hurwitz property of $D\mathbf{F}_{\mu,e}(x,0,z)$  by proving that $P_{\mu,e}(\lambda)$ is stable by using the Routh--Hurwitz criterion for cubic polynomials, that is, we will prove that $c_2>0, c_1>0, c_0>0,$ and $c_2c_1-c_0>0$. 
In fact, we need only to prove the inequalities 
$$
c_2>0,\ c_0>0, \quad \textrm{and} \quad d:=c_2c_1-c_0>0,
$$ 
because the last one immediately yields $c_1=(d+c_0)/c_2>0$.

\smallskip
A straightforward computation gives
$$
d=c_2c_1-c_0=d_0+\mu\Big(d_1+d_2\mu+d_3\mu^2\Big),
$$
where
\begin{align*}
d_0&=\frac{e}{50}x^2\big(g(x,z)\big)\\
d_1&=\frac{1}{21}\Bigg(400+400ez+
(200e^2+1600)z^2+1600ez^3+800e^2z^4
\\ &\qquad \qquad 
+(80e-1000)x^2
-20e(25e-16)z^2x^2+
(80e^2-1000e-160)zx^2
\\ &\qquad \qquad \quad 
+(6e^2-200e+252)x^4 +320e^2z^3x^2-4e(50e-63)zx^4\\
& \qquad \qquad \qquad 
+150e^2z^2x^4+\frac{252}{5}e^2zx^6-\frac{3}{5}e(25e-84)x^6+\frac{189}{50}e^2x^8\Bigg)\\
\end{align*}
\begin{align*}
d_2&=\frac{1}{441}\Bigg(
40000+320000z^2+640000z^4-64000z^3x^2-800000z^2x^2\\ 
&\qquad \qquad \quad
-16000zx^2-200000x^2+201600z^2x^4
+60000zx^4
\\ 
&\qquad \qquad \qquad 
+300400x^4-16800zx^6-
126000x^6+15876x^8\Bigg)\\
d_3&=\frac{10}{9261}\bigg(\Big(63x^4-250x^2+
400z^2+100\Big)f_2(x,z)f_3(x,z)\bigg)
\end{align*}

\medskip
We divide the proof into three steps. In Step 1 we prove that $c_2>0$ for suitable positive values of $\mu$. In Step 2 we prove that $c_0>0$ for suitable positive values of $\mu$ and $e$. Finally, according to the constraints we obtain 
for $\mu$ and $e$ in the first two steps, together with Lemma~\ref{Hurwitz-en-plano-xz}, we prove that $d>0$ for values in an open region $U\subset (0,1/500)\times(0,1/6)$ for which
all the roots of $P_{\mu,e}(\lambda)$ have negative real part.  

\medskip

{\bf Step 1.} The coefficient of $\mu$ in $c_2$ has two critical minima at $(\pm \sqrt{125/63},0)$ of the $xz$-plane, whose global minimum value is $-18650/1323$. Therefore, for $0<\mu<\mu_2^*:=1323/18650$, we get $c_2>0$ for any $(x,z)$.

\medskip
{\bf Step 2.}  The functions $f_3(x,z)$ and $f_4(x,z)$ can be written as
$$
f_3(x,z)=21\, \left( {x}^{2}-{\frac {125}{42}} \right) ^{2}+400\,{z}^{2}-{
\frac {7225}{84}}
$$
and
$$
f_4(x,z)=21\, \left( {x}^{2}-{\frac {16\,z}{21}}-{\frac {25}{14}} \right) ^{2}+
{\frac {1424}{21} \left( z-{\frac {75}{178}} \right) ^{2}}-{\frac {
21005}{356}}
$$
Thus, the global minimum values of $f_3(x,z)$ and $f_4(x,z)$ are $-7225/84$ and $-21005/356$, respectively. From the above expressions we get that  the real algebraic curves $\{f_3(x,z)=0\}$ and  $\{f_4(x,z)=0\}$ both are formed by two disjoint closed connected components (symmetric respect to the $z$-axis). Therefore, we look for rectangles $[x_3^{-},x_3^{+}]\times[z_3^{-},z_3^{+}]$ and $[x_4^{-},x_4^{+}]\times[z_4^{-},z_4^{+}]$ that contain the algebraic curves $\{f_3(x,z)=0\}$ and  $\{f_4(x,z)=0\}$, respectively.

The smallest and largest values of $x$ and $z$ in the semiplane $\{x\geq 0\}$ for the algebraic curves $\{f_3(x,z)=0\}$  are computed by using the real zeros of the resultants
$$
Res(f_3,(f_3)_z,z)=640000(21x^2-20)(x^2-5)
$$
and
$$
Res(f_3,(f_3)_x,x)=441000000(4z^2+1)(1344z^2-289)^2,
$$
respectively. Since $\sqrt{20/21}<\sqrt{5}<5/2$ and $\sqrt{289/1344}<1/2$ we obtain that $\{f_3(x,z)=0\}$ is contained in the closed rectangle $[-5/2,5/2]\times [-1/2,1/2]$. Moreover,  straightforward computations give that $f_3(x,z)$ restricted to such rectangle reaches the maximum value $3825/16$.  Analogously, we can show that $\{f_4(x,z)=0\}$ is contained in the closed rectangle $[-2,2]\times [-3/5,3/2]$ and $f_4(x,z)$ restricted to this rectangle reaches the maximum value $200$. 

From the above, $f_3(x,z)f_4(x,z)>0$  in the exterior of the closed rectangle  $\mathcal{R}:=[-5/2,5/2]\times [-3/5, 3/2]$. Hence, $c_0>0$ in $\R^2 \backslash \mathcal{R}$.
It remains to prove that $c_0>0$ in  $\mathcal{R}$. 

The polynomials $f_3(x,z)$ and $f_4(x,z)$ restricted to the rectangle $\mathcal{R}$ reach the
maximum values $16625/16$ and $41629/80$, respectively (attained at
$(\pm 5/2, 3/2)$ and $(\pm 5/2, -3/5)$). Therefore, $f_3(x,z)f_4(x,z)$ has a
global minimum value, which is greater than
$$
\min\left\{\left(-\frac{7225}{84}\right)\frac{41629}{80},\;
\left(-\frac{21005}{356}\right)\frac{16625}{16}\right\}
=-\frac{349208125}{5696}.
$$
Thus, for
$$
0<\mu<\mu_0^{*}:=\sqrt{\frac{2\cdot 441\cdot 5696}{3\cdot 5\cdot 349208125}}
=\sqrt{\frac{239232}{249434375}}\approx 0.031,
$$
we have
$$
\frac{5\mu^{2}}{441}\,f_3(x,z)\,f_4(x,z)>-\frac{2}{3}.
$$
Since $1/500<\mu_0^{*}<\mu_2^{*}$, this inequality also holds for
every $0<\mu<1/500$.

We now analyse the function $g(x,z)=ex^{2}+10ez+10$. Since $e>0$, $g$ is even in
$x$, non-decreasing in $|x|$ and increasing in $z$. Hence, on the closed rectangle
$\mathcal{R}=[-5/2,5/2]\times[-3/5,3/2]$ it attains its minimum at $(0,-3/5)$, that is,
\[
g(x,z)\ \ge\ g(0,-3/5)=10-6e \qquad\text{for all } (x,z)\in \mathcal{R} .
\]
In particular, if $0<e\le 1/6$, then $g(x,z)\ge 9$ on $\mathcal{R}$, and therefore
\[
\frac{1}{100}\bigl(g(x,z)\bigr)^{2}\ \ge\ \frac{81}{100}\ >\ \frac{2}{3}
\qquad\text{for all } (x,z)\in \mathcal{R} .
\]
Combining this with the previous estimate we obtain $c_0 \geq (81/100)-(2/3)>0$
on $\mathcal{R}$. Therefore,
$c_0>0$ in $\mathbb{R}^{2}$ for all $0<\mu<\mu_0^{*}$ and
$0<e\le 1/6$.

\medskip
{\bf Step 3.}
We regard $d=c_2c_1-c_0$ as a polynomial in $(x,z,\mu,e)$ and  $(\mu,e)$ varying over the compact rectangle $\mathcal{R}_2:=[1/2000,3/2000]\times[1/20,3/20]$, which contains
$(\mu_0,e_0)=(1/1000, 1/10)$ in its interior.
 Assign weight $1$ to $x$ and weight $2$ to $z$,
so that each monomial $x^{i}z^{j}$ in $d$ 
has weighted degree $i+2j$, and set $s:=x^{4}+z^{2}$. Thus, the weighted degrees of $d_0$,
$d_1$, $d_2$, and $d_3$ are $4$, $8$, $8$, and $12$, respectively. Hence, the highest weighted degree part of $d$ is
the part of $\mu^{3}d_3$ of weighted degree $12$, namely
$$
D_{12}:=\frac{10\mu^{3}}{9261}\left(63x^{4}+400z^{2}\right)
\left(21x^{4}+80z^{2}\right)
\left(21x^{4}+400z^{2}\right).
$$
Then, we can write
$$
d=D_{12}+R,
$$
where $R$ consists of the remaining monomials in $d$, each of which has degree at most four in $z$ and weighted degree at most 10.

On the one hand, the last three factors of $D_{12}$ are bounded below by $63s$, $21s$,
and $21s$, respectively. Thus, we have
\begin{equation}
    \label{primera-desigualdad}
D_{12}\geq 30\,\mu^3\, s^3\geq 
\bigg(\frac{30}{(2000)^3}\bigg)s^3.
\end{equation}
On the other hand,
Since $s=x^{4}+z^{2}$, $|x|\le s^{1/4}$ and $|z|\le s^{1/2}$. Thus, every monomial of weighted degree $w$ is
bounded by $s^{w/4}$. Hence, from the definition of $R$ we get that 
\begin{equation}
    \label{segunda-desigualdad}
    |R|\leq \sum_{j=0}^4\sum_{i=0}^{10-2j}\big{|}C_{ij}(\mu,e)\big{|}|x|^i|z|^j \leq C s^{5/2}, 
    \quad \mbox{for $s\geq 1$.}
\end{equation}
where $C$ is the maximum over $\mathcal{R}_2$ of the sum of the absolute values of the
coefficients of $R$, which satisfies 
$$
C\geq \big{|}C_{00}(\mu,e)\big{|} =\frac{400\,\mu}{21}+\frac{
40000\,{\mu}^{2}}{441}+\frac{2000000\,{\mu}^{3}}{9261}\geq \frac{353641}{37044000}.
$$

Combining the inequalities \eqref{primera-desigualdad} and \eqref{segunda-desigualdad}, 
for each $(\mu,e)\in \mathcal{R}_2$, we have 
$$
d\ \geq \ s^{5/2}\bigg(\frac{30}{(2000)^{3}}\,s^{1/2}-C\bigg)>0
\qquad\text{whenever }
s>s_0:=\bigg(\frac{(2000)^3 C}{30}\bigg)^{2}
 \geq 1.
$$

On the compact set $K:=\{s\le s_0\}$, the proof of
Lemma~3 gives $\delta:=\min_{K}d(\,\cdot\,;\mu_0,e_0)>0$, and by uniform
continuity of $d$ on $K\times P$ there is an open neighbourhood
$U\subset\operatorname{int} \mathcal{R}_2$ of $(\mu_0,e_0)$ with $d>\delta/2$ on $K\times U$.
Therefore, we have proved that $d>0$ on $\mathbb{R}^{2}\times U$.

In summary, from Lemma~\ref{invarianza-de-matriz-jacobiana}, Steps 1, 2 and 3, the Routh--Hurwitz criterion shows that $DF_{\mu,e}(x,y,z)$ is
Hurwitz on $\mathbb{R}^{3}$ for every $(\mu,e)\in U$. 
 \end{proof}

\begin{remark}
\label{rem-nD-vec-field}
For every $n>3$ and $(\mu,e)\in U$, the polynomial vector field
$$
\widetilde{\mathbf{F}}_{\mu,e}(\mathbf{x},\mathbf{y})
:=\bigl(\mathbf{F}_{\mu,e}(\mathbf{x}),-\mathbf{y}\bigr),
\qquad \mathbf{x}\in\R^3,\quad \mathbf{y}\in\R^{n-3},
$$
is Hurwitz on $\R^n$, has the origin as its unique singular point,
which is locally asymptotically stable, and has exactly two non-constant hyperbolic 
$2\pi$-periodic orbits, $\widetilde\gamma_i=\gamma_i\times\{0\}$, $i=1,2$.
$\widetilde\gamma_1$ is of saddle type and $\widetilde\gamma_2$ is locally asymptotically stable. Indeed, the Jacobian matrix is block diagonal:
$$
D\widetilde{\mathbf{F}}_{\mu,e}(\mathbf{x},\mathbf{y})
=\begin{pmatrix}
D\mathbf{F}_{\mu,e}(\mathbf{x})&0\\
0&-I_{n-3}
\end{pmatrix}.
$$
Hence, its eigenvalues are those of $D\mathbf{F}_{\mu,e}(\mathbf{x})$ together with
$-1$, repeated $n-3$ times, so it is Hurwitz everywhere. The equation
$\widetilde{\mathbf{F}}_{\mu,e}(\mathbf{x},\mathbf{y})=0$ forces $\mathbf{y}=0$ and $\mathbf{x}=0$;
the Hurwitz property at the origin implies local asymptotic stability.
Moreover, $\mathbf{y}(t)=\mathbf{y}(0)e^{-t}$, so any periodic solution must satisfy
$\mathbf{y}(t)\equiv 0$. Thus any periodic orbit of $\widetilde{\mathbf{F}}_{\mu,e}(\mathbf{x},\mathbf{y})$  is a periodic orbit of 
$\mathbf{F}_{\mu,e}(\mathbf{x})$. 
Together with Theorem~\ref{main-theorem}, provides  Hurwitz polynomial vector fields in every
dimension $n\ge3,$ having periodic orbits.
\end{remark}

\section{Proofs of Lemmas}
\label{proofs}

\begin{proof}[Proof of Lemma~\ref{propiedades-dinamicas}]
In  cylindrical coordinates the vector field becomes
\[
\mathbf{F}_{\mu,e}(r,\theta,z)=\bigg(-\frac{\mu r}{21}\Big(21r^4-125r^2+400z^2+100\Big),1+e\Big(z+\frac{1}{10}r^2\Big), -z-\frac{1}{10}r^2\bigg).
\]
For $r=0$ (the $z$-axis), $\mathbf{F}_{\mu,e}(r,\theta,z)$ has exactly one equilibrium at the origin. For $r> 0$, a singular point of $\mathbf{F}_{\mu,e}(r,\theta,z)$ requires 
$$
21r^4-125r^2+400z^2+100=0
 \quad \mbox{and} \quad
  -z-\frac{1}{10}r^2=0. 
$$
This algebraic system has only two solutions for $r \geq 0 $:
\begin{equation}
\label{solutions}  
(r_1,z_1)=\Big(1,-\frac{1}{10}\Big)
 \quad \mbox{and} \quad
(r_2,z_2)=\Big(2,-\frac{2}{5}\Big).
\end{equation}
For these two solutions the second component of $\mathbf{F}_{\mu,e}(r,\theta,z)$ becomes $1$. Hence, these two solutions are $2\pi$-periodic orbits of the vector field and they correspond to $\gamma_1$ and $\gamma_2$ in the statement of the lemma. Thus, the origin is the unique singular point of 
$\mathbf{F}_{\mu,e}(x,y,z)$. 

The eigenvalues of the Cartesian Jacobian matrix at the origin are  $-1$ and $\ -(100/21)\mu\pm \sqrt{-1}$. Hence, the origin is locally asymptotically stable.

Since the second component of $\mathbf{F}_{\mu,e}(r,\theta,z)$ becomes $1$ at the periodic orbits $\gamma_1$ and $\gamma_2$, 
the stability of these two orbits only depends of the eigenvalues of the cylindrical Jacobian matrix of the first and third components of $\mathbf{F}_{\mu,e}(r,\theta,z)$, which is
$$
\begin{pmatrix}
-\frac{5 \mu }{21}\left(21r^4-75r^2+20+80z^2\right) & -\frac{800 \mu}{21} rz\\[0.4pc]
-\frac{r}{5} & -1
\end{pmatrix}
$$
The eigenvalues of this Jacobian matrix at $(r_1,z_1)=(1,-1/10)$ are 
$$
 \frac{83 \mu}{21}-\frac{1}{2}+\sqrt{\bigg(\frac{83\mu}{21}\bigg)^2+\frac{67\mu}{21}+\frac{1}{4}}>0
 \quad
 \mbox{and}
 \quad
 \frac{83\mu}{21}-\frac{1}{2}-\sqrt{\bigg(\frac{83\mu}{21}\bigg)^2+\frac{67\mu}{21}+\frac{1}{4}}<0.
$$
Hence, the periodic orbit $\gamma_1$  is  hyperbolic of saddle type.

The eigenvalues of the Jacobian matrix at $(r_2,z_2)=(2,-2/5)$ are   
$$
-\frac{172 \mu}{21}-\frac{1}{2}
+\sqrt{\bigg(\frac{172 \mu}{21}\bigg)^2-\frac{428 \mu}{21}+\frac{1}{4}}
\quad
 \mbox{and}
 \quad
 -\frac{172 \mu}{21}-\frac{1}{2}
-\sqrt{\bigg(\frac{172 \mu}{21}\bigg)^2-\frac{428\mu}{21}+\frac{1}{4}},
$$
whose real part is negative for $0<\mu$.
Hence, the periodic orbit $\gamma_2$  is locally asymptotically stable.

It remains to prove that $F_{\mu,e}(x,y,z)$ has no periodic orbits other than
$\gamma_1$ and $\gamma_2$. The restriction of $F_{\mu,e}(x,y,z)$ to the $z$-axis gives $F_{\mu,e}(0,0,z)=(0,0,-z)$, which implies that the $z$-axis is an invariant straight line of $F_{\mu,e}(x,y,z)$. Hence, any periodic orbit of $F_{\mu,e}(x,y,z)$ is contained in the complement of the $z$-axis. To look for such periodic orbits, we will use $F_{\mu,e}(r,\theta,z)$, with $r>0$.

Since the first and third components of
$F_{\mu,e}(r,\theta,z)$ do not depend on $\theta$, they define the planar
polynomial vector field
\begin{equation}
\label{eq:reduced}
X_{\mu}(r,z):=\left(-\frac{\mu r}{21}\left(21r^{4}-125r^{2}+400z^{2}+100\right),\;
-z-\frac{1}{10}r^{2}\right),
\end{equation}
which we call the reduced vector field. Note that $X_{\mu}(r,z)$ does not depend on
$e$. 
 
Let $\Gamma$ be a $T$-periodic orbit of $F_{\mu,e}(x,y,z)$. The projection $(x,y,z)\to(r,z)$ sends $\Gamma$ onto either a singular point, or a periodic orbit or a polycycle (or graphic) of the planar vector field $X_{\mu}(r,z)$, contained in the right half-plane $\mathbb{H}:=\{r>0\}$, which is invariant under the flow of $X_{\mu}(r,z)$. The singular points of
$X_{\mu}(r,z)$ in $\mathbb{H}$ are exactly $(r_1,z_1)$ and $(r_2,z_2)$ in \eqref{solutions}, which correspond to the periodic orbits $\gamma_1$ and $\gamma_2$, respectively. This fact and the uniqueness of trajectories of $C^1$ vector fields imply that the projection of $\Gamma$ is not be a polycycle. So, it must be a periodic orbit of $X_{\mu}(r,z)$. We
claim that this case cannot occur. Indeed, a direct computation gives
\begin{equation*}
\label{eq:div}
\operatorname{div}X_{\mu}(r,z)
=-1-\mu\Bigg(5\bigg(r^{2}-\frac{25}{14}\bigg)^{2}+\frac{400}{21}z^{2}
-\frac{6575}{588}\Bigg),
\end{equation*}
whence for $0<\mu\leq 588/6575$, we have
$$
\operatorname{div}X_{\mu}(r,z)\le-1+\frac{6575}{588}\,\mu<0
\qquad\text{for all }(r,z)\in\mathbb{H}.
$$
Since $\mathbb{H}$ is simply connected, the Bendixson criterion implies that $X_{\mu}(r,z)$ does not have (non-constant) periodic orbits in $\mathbb{H}$. This proves the
claim. Therefore, $\gamma_1$ and $\gamma_2$ are the only two periodic orbits of
$F_{\mu,e}(x,y,z)$, for 
$0< \mu \leq 588/6575$.
\end{proof}

\begin{proof}[Proof of Lemma~\ref{invarianza-de-matriz-jacobiana}]
Consider the rotation matrix  $R_\theta$
that makes a rotation by an angle $\theta\in [0,2\pi]$ around the $z$-axis. Since $x^2+y^2$ is invariant under these rotations, the field satisfies
\[
\mathbf{F}(R_\theta \mathbf{q})=R_\theta \mathbf{F}(\mathbf{q}), \qquad \mathbf{q}\in\R^3.
\]
Differentiating with respect to $\mathbf{q}$, with $\theta$ fixed, yields
\[
D\mathbf{F}(R_\theta \mathbf{q})R_\theta=R_\theta D\mathbf{F}(\mathbf{q}).
\]
Multiplying by the right  this last equation with $R_\theta^{-1}$, we get
\[
D\mathbf{F}(R_\theta \mathbf{q})=R_\theta D\mathbf{F}(\mathbf{q})R_\theta^{-1}.
\]
Therefore, the characteristic polynomials of $D\mathbf{F}(\mathbf{q})$ and $D\mathbf{F}(R_\theta \mathbf{q})$ are the same. 
\end{proof}

\begin{proof}[Proof of Lemma~\ref{Hurwitz-en-plano-xz}]  For the values $\mu=\mu_0=1/1000$ and $e=e_0=1/10$, the vector field \eqref{campo-principal} becomes

\begin{equation}
\label{campo-principal-part}
\mathbf{F}_{\mu_0,e_0}(x,y,z):=\Big(F_1(x,y,z), F_2(x,y,z), F_3(x,y,z)\Big),
\end{equation}
where
\begin{align*}
F_1(x,y,z)&={\frac {x}{1000} \left(  \left( {x}^{2}+{y}^{2}-5 \right)  \left( {
\frac {20}{21}}-{x}^{2}-{y}^{2} \right) -{\frac {400\,{z}^{2}}{21}}
 \right) }- \left( 1+\frac{z}{10}+{\frac {{x}^{2}}{100}}+{\frac {{y}^{2}}{100}
} \right) y
\\
F_2(x,y,z)&={\frac {y}{1000} \left(  \left( {x}^{2}+{y}^{2}-5 \right)  \left( {
\frac {20}{21}}-{x}^{2}-{y}^{2} \right) -{\frac {400\,{z}^{2}}{21}}
 \right) }+ \left( 1+\frac{z}{10}+{\frac {{x}^{2}}{100}}+{\frac {{y}^{2}}{100}
} \right) x
\\
F_3(x,y,z)&=-z-\frac{1}{10}\,{x}^{2}-\frac{1}{10}\,{y}^{2}
\end{align*}
The characteristic polynomial of $D\mathbf{F}_{\mu_0,e_0}(x,0,z)$ has the form
$$
P_{\mu_0,e_0}(\lambda)=\lambda^3+c_{20}\lambda^2
+c_{10}\lambda+c_{00},
$$
where
\begin{align*}
c_{20}&={\frac {3}{500} \left(x^2-{\frac {125}{63}} \right) ^{2}}+{\frac {4\,{z}^{2}}{105}}+{\frac {26087
}{26460}}
, \\[1pc]
c_{10}&={\frac {44521}{44100}}+\frac{z}{5}+{\frac {2129\,{z}^{2}}{44100}}+{\frac {4\,{
z}^{4}}{11025}}-{\frac {19\,z{x}^{2}}{5250}}+{\frac {709\,{x}^{2}}{
44100}}\\[0.2pc] &\qquad \quad -{\frac {{z}^{2}{x}^{2}}{2205}}+{\frac {37837\,{x}^{4}}{5880000
}}+{\frac {{z}^{2}{x}^{4}}{8750}}-{\frac {{x}^{6}}{21000}}+{\frac {{x}
^{8}}{200000}}
,\\[1pc]
c_{00}&={\frac {44101}{44100}}+\frac{z}{5}+{\frac {449\,{z}^{2}}{44100}}+{\frac {4\,{z
}^{4}}{11025}}-{\frac {8\,{z}^{3}{x}^{2}}{55125}}+{\frac {433\,z{x}^{2
}}{220500}}+{\frac {877\,{x}^{2}}{44100}}\\[0.2pc] &\qquad \quad -{\frac {{z}^{2}{x}^{2}}{2205
}}+{\frac {z{x}^{4}}{22050}}+{\frac {1381\,{x}^{4}}{5880000}}+{\frac {
{z}^{2}{x}^{4}}{8750}}-{\frac {z{x}^{6}}{131250}}-{\frac {{x}^{6}}{
21000}}+{\frac {{x}^{8}}{200000}},
\end{align*}
and $d_{00}=c_{20}c_{10}-c_{00}$, up to a positive constant factor, takes the form
\begin{align*}
d_{00}&=
 12800000\,{z}^{6}+\left( 6048000\,{x}^{4}-24000000\,{x}^{2}+
1706400000 \right) {z}^{4} 
\\[0.2pc] &\qquad 
+\left( 6720000\,{x}^{2}+7056000000
 \right) {z}^{3} + 
 \Big(71322600000-501300000\,{x}^{2}
 \\[0.2pc] &\qquad \qquad
 +506284000\,{
x}^{4}-6720000\,{x}^{6}
+811440\,{x}^{8} \Big) {z}^{2}
+\Big(1764000000
 \\[0.2pc] &\qquad \qquad \quad
 -9612120000\,{x}^{2}+1149120000\,{x}^{4} -
13053600\,{x}^{6}
 \Big) z
 \\[0.2pc] &\qquad \qquad  \qquad
+ 17724200000-25646700000\,{x}^{2}+11053721500\,{x}^{4}
\\[0.2pc] &\qquad  \qquad  \qquad \quad
-52974750
\,{x}^{6}+36850065\,{x}^{8}
-374850\,{x}^{10}+
 27783\,{x}^{12}.
\end{align*}

According to Steps 1 and 2 of the proof of Theorem~\ref{main-theorem}, we need to prove only that $d_{00}>0$ for all $(x,z)\in \R^2$.

By considering $d_{00}$ 
as a polynomial in $z$ with coefficients in $\R[x]$, we compute  the discriminant sequence by using \texttt{DiscriminantSequence} of \textit{Maple}, which has the form
\[
\mathcal{DS}(d_{00})=
\big[
1,-D_2,
-D_3,
D_{4},
D_{5},
-D_{6}
\big],
\]
where
\begin{align*}
D_2:=&\, 63\,x^4-250\,x^2+17775,\\
D_{3}:=& \,111132\,x^{12}-1102500\,x^{10}
+70387800\,x^8-1616076250\,x^6\\&\;
+18925970625\,x^4-362731687500\,x^2
+8294114390625,\\
D_{4}:=&\, 26072900784\,x^{24}
-539101332000\,x^{22}
+54216440775000\,x^{20}\\&\;
-1279135339650000\,x^{18}
+49060940936175000\,x^{16}\\&\; \;
-901852112274750000\,x^{14}
+26740634417559718750\,x^{12}\\&\quad
-29510380031013281250\,x^{10}
+21823983403606930078125\,x^8\\&\quad \;
+126307747804471611328125\,x^6
+10174454329692519931640625\,x^4\\&\quad \; \;
+31486855429697047119140625\,x^2
+1191580881794092278808593750,\\
\end{align*}
$D_5$ and $D_6$ are even polynomials with integer coefficients of degrees 40 and 60, respectively. Moreover, the leading coefficient of every $D_i$, $i=2\ldots,6$, is positive.
By applying again the \texttt{DiscriminatSequence} to each polynomial $D_2,\ldots,D_6$, respect to $x$, we can determine the sign list of the discriminant sequence  $\mathcal{DS}(d_{00})$. For instance, the discriminant sequences of the polynomial $D_2$ of degree $4$, the polynomial $D_3$ of degree $12$,  and the polynomial $D_4$ of degree 24 are
$$
[1, 1, -1, 1],
\qquad
[1, 1, -1, 1, 1, 1, -1, 1, -1, 1, 1, 1]
$$
and
$$
[1, 1, -1, 1, 1, -1, 1, 1, -1, -1, -1, 1, -1, -1, -1, 1, 1, -1, 1, 1, 1, -1, 1, 1],
$$
respectively. The first sequence
has $2$ changes of sign, the second one has $6$ changes of sign, and the third one has $12$ changes of sign, that implies, according to \cite[Theorem 2.1]{Yang}, that $D_2$, $D_3$ and $D_4$ do not have any real zero. Hence, $D_2$, $D_3$ and $D_4$ are positive on $\R$.

Analogously, we compute the discriminant sequences of $D_5$ and $D_6$, obtaining that $D_5$ has only two real zeros and that $D_6$ is positive.
Thus, for each fixed $x\in \R$ , we obtain that the discriminant sequence  $\mathcal{DS}(d_{00})$ has one of the three forms:
$$
[1,-1,-1,1,1,-1] ,
\quad
[1,-1,-1,1,0,-1] ,
\quad
\mbox{or}
\quad
[1,-1,-1,1,-1,-1] ,
$$
The revised sign list (see \cite{Yang}) in the second case becomes
$$
[1,-1,-1,1,-1,-1] .
$$

Consequently, in all three cases, the revised sign list
has three sign changes. By \cite[Theorem~2.1]{Yang},
for every fixed $x\in\R$, the polynomial
$z\mapsto d_{00}(x,z)$ has no real zeros.
Since its leading coefficient is positive, we conclude
that $d_{00}(x,z)>0$ for every $(x,z)\in\R^2$.
\end{proof}

\newpage

\section{Numerical Illustration}\label{example}

In this section, we illustrate the dynamical behavior of the polynomial vector field  $F_{\mu_0,e_0}(x,y,z)$ given in \eqref{campo-principal-part} . The phase portraits provide a geometric interpretation of the coexistence of a unique asymptotically stable equilibrium and two hyperbolic $2 \pi-$periodic orbits, one of saddle type and the other attracting, despite the global Hurwitz property of the Jacobian matrix.

\begin{figure}[!htbp]
    \centering
\includegraphics[width=\textwidth,height=0.50\textheight,keepaspectratio]
    {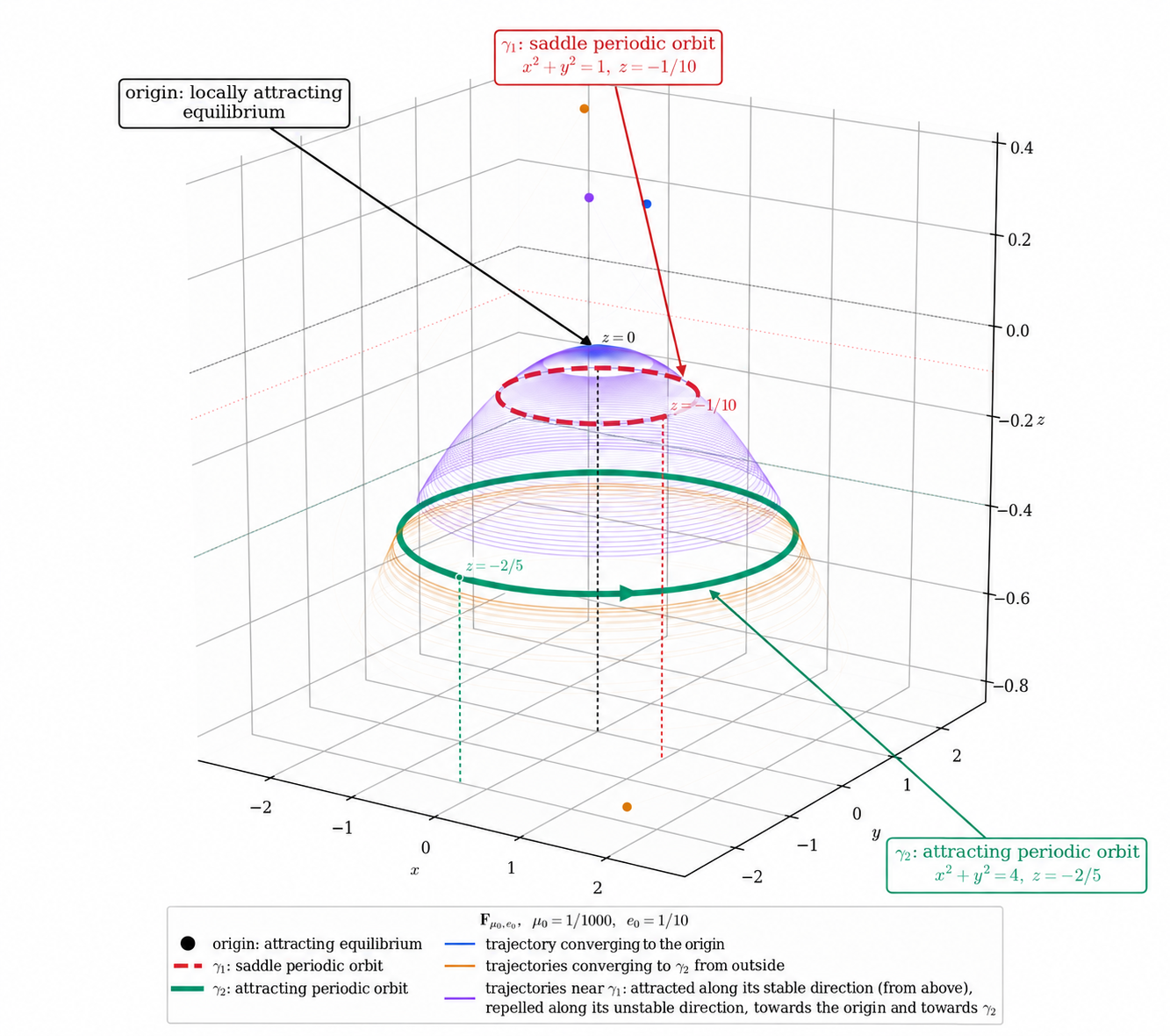}
    \caption{Phase portrait of the polynomial vector field $F_{\mu_0,e_0}(x,y,z)$.
    }
    \label{fig:phase-portrait-polynomial-field}
\end{figure}

\begin{figure}[!htbp]
    \centering
\includegraphics[width=\textwidth,height=0.6\textheight,keepaspectratio]
    {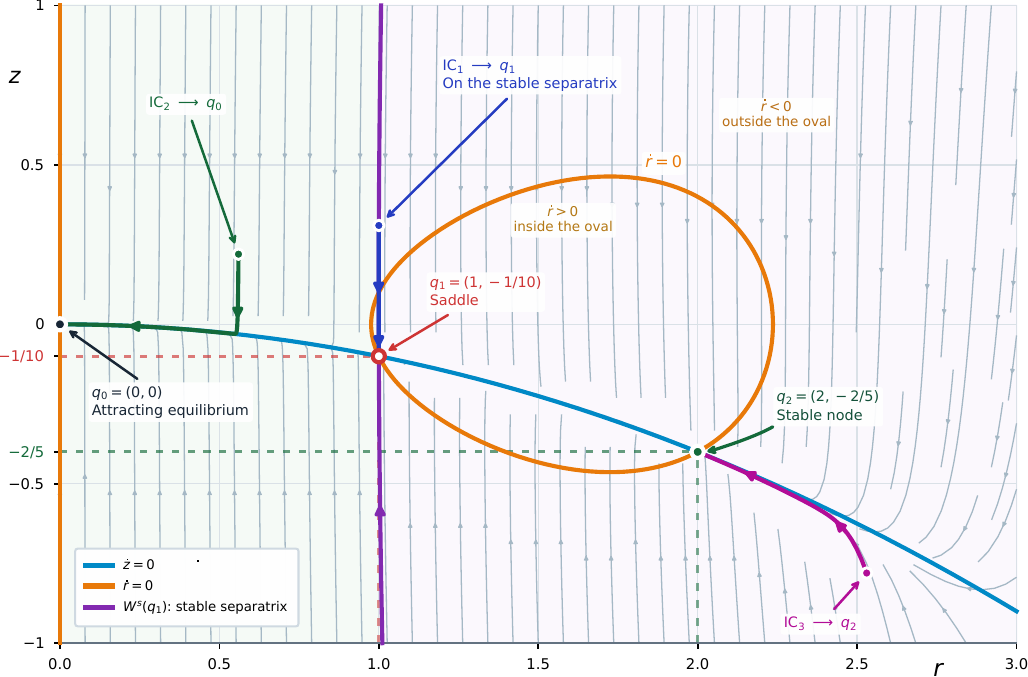}
    \caption{Phase portrait of the reduced vector field $X_{\mu}(r,z)$, given in \eqref{eq:reduced}, for $\mu=\mu_0=1/1000$ (it does not depend
on $e=e_0=1/10$). The origin
is an attractor and corresponds to the singular point of $F_{\mu_0,e_0}$. The  singular point $q_1=(1,-1/10)$ is a saddle and corresponds to the
saddle periodic orbit $\gamma_1$; the singular point $q_2=(2,-2/5)$ is a stable
node and corresponds to the attracting periodic orbit $\gamma_2$; The blue
curve is the null-cline $\{\dot z=0\}$, which is the parabola $\{z=-r^{2}/10\}$, and the orange curve is
the null-cline $\{\dot r=0\}$, which is the oval $\{21r^{4}-125r^{2}+400z^{2}+100=0\}$; inside the oval
$r$ increases and outside it decreases. The purple curve is the stable
separatrix, $W^{s}(q_1)$, of the saddle.
Although it looks like the vertical line $\{r=1\}$, that line is not
invariant; the separatrix is
the graph of $r=1+\frac{200\mu}{21}\bigl[(z-\frac{1}{10})^{2}-\frac{1}{25}\bigr]
+O(\mu^{2})$, which for $|z|\le1$ differs from $r=1$ by less than $0.011$,
hence its apparent straightness. The separatrix  $W^{s}(q_1)$ splits the right half-plane $\{r>0\}$
into the basin of attraction of the origin (to its left) and the basin of
attraction of $q_2$ (to its right); rotating it around the $z$-axis one
obtains the stable manifold $W^{s}(\gamma_1)$, which separates the basin of
the origin from the basin of $\gamma_2$ in $\mathbb{R}^{3}$. The solutions with initial conditions
$\mathrm{IC}_1$, $\mathrm{IC}_2$ and $\mathrm{IC}_3$ converge, respectively,
to the saddle along its stable separatrix, to the origin, and to the
attracting periodic orbit.}
\end{figure}

\FloatBarrier

\section*{Statement on the use of generative artificial intelligence}

Generative artificial intelligence played a role in the exploratory stage of this work.
The construction presented in this paper emerged from an iterative and exploratory dialogue 
between the authors and OpenAI's ChatGPT. This dialogue was guided by mathematical questions, constraints, strategies, reformulations, and
additional mathematical requirements, all based on the author's mathematical experience and previous results.
In this process, ChatGPT served as an exploratory tool for exploring, suggesting, and testing candidate vector fields that might satisfy the desired properties, namely Hurwitz and the existence of periodic orbits.

AI-generated suggestions, symbolic manipulations, and numerical observations were not treated as mathematical evidence. The final construction, mathematical validation, and all proofs were carried out by the authors. In particular, the global Hurwitz property was proved analytically using exact inequalities and the Routh--Hurwitz criterion.  All mathematical results in the paper were proved and organized by the authors, who take full responsibility for every theorem, computation, and conclusion presented in the manuscript.

\end{document}